\documentclass[portuges,12pt,letter]{article}
\usepackage[centertags]{amsmath}
\usepackage{amsfonts}
\usepackage{newlfont}
\usepackage{amscd}
\usepackage{graphics}
\usepackage{epsfig}
\usepackage{indentfirst}
\usepackage{amsxtra}
\usepackage[latin1]{inputenc}
\usepackage{amssymb, amsmath}
\usepackage{amsthm}
\usepackage[mathscr]{eucal}

% THEOREM Environments
%%%%%%%%%%%%%%%%%%%%%%%%%%%%%%%%%%%%
\newtheorem{thm}{Theorem}[section]

\setlength{\textwidth}{18cm} \setlength{\textheight}{22cm}
\setlength{\topmargin}{-2cm} \setlength{\oddsidemargin}{-1cm}
\author{Fabio Silva Botelho \\ Department of Mathematics \\  Federal University of Santa Catarina, UFSC \\
Florian\'{o}polis, SC - Brazil}

\title{\bf  A  primal dual variational formulation suitable for a large class of non-convex problems in optimization}
\date{}
\begin{document}
\maketitle

\abstract{
In this article we develop a new primal dual variational formulation suitable for a large class of non-convex problems in the calculus of variations.

The results are obtained through basic tools of convex analysis, duality theory, the Legendre transform concept and the respective relations between the primal and dual variables. The novelty here is that the dual formulation is established also for the primal variables, however with a large domain region of concavity about a
critical point.

Finally, we formally prove there is no duality gap between the primal and dual formulations  in a local extremal context.
 }

\section{Introduction} In this work we develop a primal dual variational formulation for non-convex problems in the calculus of
variations  which, in some sense, generalizes, extends and complements the original Telega, Bielski and their co-workers results in the articles \cite{2900,85,11,10}.

The convex analysis results here used may be found in \cite{[6],12,29,120}, for example. Similar results for other problems may be found in \cite{120}.

Finally, details on the function spaces addressed may be found in \cite{1}.

At this point we start to describe the primal formulation.

Let $\Omega \subset \mathbb{R}^3$ be an open, bounded and connected set with a regular (Lipschitzian) boundary denoted by
$\partial \Omega.$

Consider the functional $J:V \rightarrow \mathbb{R}$ where
\begin{eqnarray}
J(u)&=& G_0(\nabla u)+G_1(u)+G_2(u) \nonumber \\ &&- F(u)-\langle u,f \rangle_{L^2}+G_3(u),
\end{eqnarray}
where
$$G_0(\nabla u)=\frac{\gamma}{2}\int_\Omega \nabla u\cdot \nabla u\;dx,$$
$$G_1(u)=\frac{\alpha}{4}\int_\Omega u^4\;dx,$$
$$G_2(u)=\frac{(-\beta+K-\varepsilon)}{2} \int_\Omega u^2\;dx,$$
$$F(u)=\frac{K}{2}\int_\Omega u^2\;dx$$ and
$$G_3(u)=\frac{\varepsilon}{2}\int_\Omega u^2\;dx$$ so that
\begin{eqnarray}
J(u)&=& \frac{\gamma}{2}\int_\Omega \nabla u\cdot \nabla u\;dx+\frac{\alpha}{4}\int_\Omega u^4\;dx \nonumber \\ &&
-\frac{\beta}{2}\int_\Omega u^2\;dx-\langle u,f \rangle_{L^2},\; \forall u \in V.
\end{eqnarray}
Here $dx=dx_1\;dx_2\;dx_3$, $\alpha>0,\beta>0,\gamma>0, \varepsilon>0, \;K>\beta +\varepsilon$, $f \in C(\overline{\Omega})$ and $$V=\{u \in C^2(\overline{\Omega})\;:\; u=0, \text{ on } \partial \Omega\}.$$

Moreover, we recall that $V$ is a Banach space with the norm $\|\cdot \|_V$, where
\begin{eqnarray}\|u\|_V&=&\max_{\mathbf{x} \in \overline{\Omega}}\{|u(\mathbf{x})|+|u_x(\mathbf{x})|+|u_y(\mathbf{x})|+|u_z(\mathbf{x})|
+|u_{xy}(\mathbf{x})|+|u_{xz}(\mathbf{x})|+|u_{yz}(\mathbf{x})| \nonumber \\ &&+|u_{xx}(\mathbf{x})|+|u_{yy}(\mathbf{x})|+|u_{zz}(\mathbf{x})|\}, \forall u \in V,\end{eqnarray}
and generically we denote $$\langle u,v\rangle_{L^2}=\int_\Omega u\;v\;dx, \; \forall u, v \in L^2(\Omega)\equiv L^2,$$
and $$\langle \mathbf{u},\mathbf{v}\rangle_{L^2}=\int_\Omega \mathbf{u} \cdot \mathbf{v}\;dx, \; \forall \mathbf{u}, \mathbf{v} \in L^2(\Omega;\mathbb{R}^3)\equiv L^2.$$

Now observe that
\begin{eqnarray}
\inf_{u \in U}J(u) &\leq&  \inf_{u \in U} \{  G_0(\nabla u)-\langle \nabla u,z_0^* \rangle_{L^2} \nonumber \\ &&
+G_1(u)-\langle u,z_1^*\rangle_{L^2} \nonumber \\ && +G_2(u)-\langle u,z_2^*\rangle_{L^2}\nonumber \\ &&
+G_3(u)-\langle u,f\rangle_{L^2} \nonumber \\ &&+ \sup_{u \in U}\{\langle \nabla u,z_0^* \rangle_{L^2} +
\langle u,z_1^*\rangle_{L^2} +\langle u,z_2^*\rangle_{L^2}-F(u)\}\} \nonumber \\ &=&
 \inf_{u \in U} \{  G_0(\nabla u)-\langle \nabla u,z_0^* \rangle_{L^2} \nonumber \\ &&
+G_1(u)-\langle u,z_1^*\rangle_{L^2} \nonumber \\ && +G_2(u)-\langle u,z_2^*\rangle_{L^2}\nonumber \\ &&
+G_3(u)-\langle u,f\rangle_{L^2}\} \nonumber \\ &&+F^*(-\text{ div } z_0^*+z_1^*+ z_1^*)
\nonumber \\ &=& \sup_{v^* \in Y_1 \times V\times V}\{-G_0^*(v_0^*+z_0^*)-G_1^*(v_1^*+z_1^*) \nonumber \\ &&-G_2^*(v_2^*+z_2^*)-G_3^*(\text{ div }v^*_0-v_1^*-v_2^*+f)\}
\nonumber \\ &&+F^*(-\text{ div } z_0^*+z_1^*+ z_2^*),\;\; \forall z^*=(z_0^*,z_1^*,z_2^*) \in Y_1 \times V \times V,
\end{eqnarray}
where $$F^*(-\text{ div } z_0^*+z_1^*+ z_2^*)=\frac{1}{2K}\int_\Omega (-\text{ div } z_0^*+z_1^*+ z_2^*)^2\;dx,$$ $v^*=(v_0^*,v_1^*,v_2^*) \in Y_1\times V\times V$, $\;Y_1=C^1(\overline{\Omega};\mathbb{R}^3)$ and

\begin{eqnarray}
G_0^*(v_0^*,z_0^*)&=&\sup_{v_0 \in Y_1}\{\langle v_0,v_0^*+z_0^*\rangle_{L^2}-G_0(v_0)\}
\nonumber \\ &=& \sup_{v_0 \in Y_1}\left\{\langle v_0,v_0^*+z_0^*\rangle_{L^2}-\frac{\gamma}{2}\int_\Omega |v_0|^2\;dx\right\}
\nonumber \\ &=& \frac{1}{2\gamma}\int_\Omega |v_0^*+z_0^*|^2\;dx.
\end{eqnarray}

 Also,
\begin{eqnarray}
G_1^*(v_1^*,z_1^*)&=&\sup_{u \in V}\{\langle u ,v_1^*+z_1^*\rangle_{L^2}-G_1(u)\}
\nonumber \\ &=& \sup_{u \in V}\left\{\langle u,v_1^*+z_1^*\rangle_{L^2}-\frac{\alpha}{4}\int_\Omega u^4\;dx\right\}
\nonumber \\ &=& \frac{3}{4\alpha^{1/3}}\int_\Omega |v_1^*+z_1^*|^{4/3}\;dx,
\end{eqnarray}
\begin{eqnarray}
G_2^*(v_2^*,z_2^*)&=&\sup_{u \in V}\{\langle u ,v_2^*+z_2^*\rangle_{L^2}-G_2(u)\}
\nonumber \\ &=& \sup_{u \in V}\left\{\langle u,v_2^*+z_2^*\rangle_{L^2}-\frac{(-\beta+K-\varepsilon)}{2}\int_\Omega u^2\;dx\right\}
\nonumber \\ &=& \frac{1}{2(K-\beta-\varepsilon)}\int_\Omega (v_2^*+z_2^*)^2\;dx
\end{eqnarray}
and
\begin{eqnarray}
G_3^*(\text{ div }v^*_0-v_1^*-v_2^*+f)&=&\sup_{u \in V}\{-\langle \nabla u ,v_0^*\rangle_{L^2}-\langle u,v_1^*\rangle_{L^2}
\nonumber \\ &&-\langle u,v_2^* \rangle_{L^2}+\langle u,f \rangle_{L^2}-G_3(u)\}
\nonumber \\ &=& \sup_{u \in V}\{-\langle \nabla u ,v_0^*\rangle_{L^2}-\langle u,v_1^*\rangle_{L^2}
\nonumber \\ &&-\langle u,v_2^* \rangle_{L^2}+\langle u,f \rangle_{L^2}-\frac{\varepsilon}{2}\int_\Omega u^2\;dx\}
\nonumber \\ &=& \frac{1}{2\varepsilon} \int_\Omega(\text{ div }v^*_0-v_1^*-v_2^*+f)^2\;dx.
\end{eqnarray}
At this point, we denote,
\begin{eqnarray}\label{p0a121}
J^*(v^*,z^*)&=& -G_0^*(v_0^*+z_0^*)-G_1^*(v_1^*+z_1^*) \nonumber \\ &&-G_2^*(v_2^*+z_2^*)-G_3^*(\text{ div }v^*_0-v_1^*-v_2^*+f)
\nonumber \\ && +F^*(-\text{ div } z_0^*+z_1^*+ z_2^*).
\end{eqnarray}

The extremal equation,
$$\frac{\partial J^*(v^*,z^*)}{\partial z^*}=\mathbf{0}$$ gives the following system.

Specifically from
$$\frac{\partial J^*(v^*,z^*)}{\partial z^*_0}=\mathbf{0},$$ we get
$$-\frac{\partial G_0^*(v_0^*+z_0^*)}{\partial z_0^*}+\nabla \left(\frac{\partial F^*(-\text{ div } z_0^*+z_1^*+ z_2^*)}{\partial w_0^*}\right)=\mathbf{0},$$
where $$w_0^*=-\text{ div }z_0^*$$ so that

$$\frac{v_0^*+z_0^*}{\gamma}-\nabla \left(\frac{-div z_0^*+z_1^*+z_2^*}{K}\right)=\mathbf{0}.$$

Hence, defining $$\hat{u}=\frac{-div z_0^*+z_1^*+z_2^*}{K},$$ we have
\begin{equation}\label{p0a122}v_0^*=-z_0^*+\gamma \nabla \hat{u}.\end{equation}

From
$$\frac{\partial J^*(v^*,z^*)}{\partial z^*_1}=\mathbf{0},$$ we get
$$-\frac{\partial G_1^*(v_1^*+z_1^*)}{\partial z_1^*}+\frac{\partial F^*(-\text{ div } z_0^*+z_1^*+ z_2^*)}{\partial z^*_1}=\mathbf{0},$$
so that
$$\frac{(v_1^*+z_1^*)^{1/3}}{\alpha^{1/3}}=\hat{u},$$
that is,
\begin{equation}\label{p0a123}v_1^*=-z_1^*+\alpha \hat{u}^3.\end{equation}

Finally, from
$$\frac{\partial J^*(v^*,z^*)}{\partial z^*_2}=\mathbf{0},$$ we get
$$-\frac{\partial G_2^*(v_2^*+z_2^*)}{\partial z_2^*}+\frac{\partial F^*(-\text{ div } z_0^*+z_1^*+ z_2^*)}{\partial z^*_2}=\mathbf{0},$$
so that
$$\frac{(v_2^*+z_2^*)}{K-\beta-\varepsilon}=\hat{u},$$
that is,
\begin{equation}\label{p0a124}v_2^*=-z_2^*+(K-\beta-\varepsilon) \hat{u}.\end{equation}

Replacing (\ref{p0a122}), (\ref{p0a123}) and (\ref{p0a124}) into (\ref{p0a121}), we obtain
\begin{eqnarray}
J^*(v^*,z^*) &=& -\frac{\gamma}{2} \int_\Omega |\nabla \hat{u}|^2\;dx-\frac{3}{4\alpha^{1/3}}\int_\Omega \alpha^{4/3} \hat{u}^4\;dx
\nonumber \\ &&-\frac{(K-\beta-\varepsilon)}{2}\int_\Omega \hat{u}^2\;dx+\frac{K}{2}\int_\Omega \hat{u}^2\;dx
\nonumber \\ &&-\frac{1}{2\varepsilon} \int_\Omega (\text{ div }z_0^*-z_1^*-z_2^*-\gamma \nabla^2\hat{u}+\alpha \hat{u}^3+(K-\beta-
\varepsilon)\hat{u}-f)^2
\;dx \nonumber \\ &=& -\frac{\gamma}{2} \int_\Omega |\nabla \hat{u}|^2\;dx-\frac{3\;\alpha}{4}\int_\Omega  \hat{u}^4\;dx
+\frac{\beta+\varepsilon}{2}\int_\Omega \hat{u}^2\;dx
\nonumber \\ &&-\frac{1}{2\varepsilon} \int_\Omega (-\gamma \nabla^2\hat{u}+\alpha \hat{u}^3-(\beta+
\varepsilon)\hat{u}-f)^2
\;dx  \nonumber \\ &\equiv& \hat{J}_\varepsilon(\hat{u}).
\end{eqnarray}
\section{ The main duality principle}
With such statements, definitions  and results in mind we prove the following theorem.
\begin{thm} Considering the context of the last section statements, definitions and results, let $J:V \rightarrow \mathbb{R}$ be defined by
\begin{eqnarray}
J(u)&=& \frac{\gamma}{2}\int_\Omega \nabla u\cdot \nabla u\;dx+  \frac{\alpha}{4}\int_\Omega u^4\;dx \nonumber \\ &&
-\frac{\beta}{2}\int_\Omega u^2\;dx-\langle u,f \rangle_{L^2},\; \forall u \in V.
\end{eqnarray}
Here $f \in C(\overline{\Omega}).$
Let $\hat{J}_\varepsilon:V \rightarrow \mathbb{R}$ be defined by
\begin{eqnarray} \hat{J}_\varepsilon(\hat{u})&=&  -\frac{\gamma}{2} \int_\Omega |\nabla \hat{u}|^2\;dx-\frac{3\;\alpha}{4}\int_\Omega  \hat{u}^4\;dx
+\frac{\beta+\varepsilon}{2}\int_\Omega \hat{u}^2\;dx
\nonumber \\ &&-\frac{1}{2\varepsilon} \int_\Omega (-\gamma \nabla^2\hat{u}+\alpha \hat{u}^3-(\beta+
\varepsilon)\hat{u}-f)^2
\;dx.
\end{eqnarray}

Assume $u_0 \in V$ is such that $$\delta J(u_0)=\mathbf{0},$$ and $$\delta^2J(u_0) > \mathbf{0}.$$

Under such hypotheses, $$\delta \hat{J}_\varepsilon(u_0)=\mathbf{0},$$ $$J(u_0)=\hat{J}_\varepsilon(u_0)$$ and for $\varepsilon>0$ sufficiently small,
\begin{eqnarray}\delta^2 \hat{J}_\varepsilon(u_0)&=& -(\delta^2 J(u_0)-\varepsilon)-\frac{(\delta^2J(u_0)-\varepsilon)^2}{\varepsilon}
\nonumber \\ &\approx& -\mathcal{O}\left(\frac{1}{\varepsilon}\right) \nonumber \\ &<& \mathbf{0},\end{eqnarray} so that there exist $r,r_1>0$ such that
\begin{eqnarray}
J(u_0)&=&\inf_{u \in B_r(u_0)}J(u) \nonumber \\ &=&
\sup_{\hat{u} \in B_{r_1}(u_0)}\hat{J}_\varepsilon(\hat{u})
\nonumber \\ &=&\hat{J}_\varepsilon(u_0).
\end{eqnarray}
\end{thm}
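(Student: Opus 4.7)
The plan is to reduce everything to the Euler--Lagrange identity for $J$ at $u_0$ and to the self-adjoint operator $A_0:=\delta^2J(u_0)$, namely $A_0\varphi=-\gamma\nabla^2\varphi+3\alpha u_0^2\varphi-\beta\varphi$. First I would derive $\delta J(u_0)=-\gamma\nabla^2u_0+\alpha u_0^3-\beta u_0-f=0$ by an integration by parts using the boundary condition defining $V$, and then introduce the residual map
\[ E(\hat u):=-\gamma\nabla^2\hat u+\alpha\hat u^3-(\beta+\varepsilon)\hat u-f, \]
so that the Euler--Lagrange identity becomes the very clean statement $E(u_0)=-\varepsilon u_0$, and $\hat J_\varepsilon(\hat u)$ is $G(\hat u)-\frac{1}{2\varepsilon}\int_\Omega E(\hat u)^2\,dx$ with $G$ collecting the first three terms. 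Note also that $\delta E(u_0)=A_0-\varepsilon I$ and $\delta^2E(u_0)\cdot(\varphi,\varphi)=6\alpha u_0\varphi^2$, both of which are the pieces I will need below.

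Next I would verify the three intermediate claims in order. For $\delta\hat J_\varepsilon(u_0)=0$: compute $\delta\hat J_\varepsilon(u_0)\cdot\varphi$ term by term; the residual contribution is $-\frac{1}{\varepsilon}\int E(u_0)\,(\delta E(u_0)\varphi)\,dx$, and substituting $E(u_0)=-\varepsilon u_0$ cancels the $1/\varepsilon$ and, after one integration by parts against $-\gamma\nabla^2\varphi$, the resulting expression is precisely the negative of $\delta G(u_0)\cdot\varphi$. For the value equality, substitute $E(u_0)=-\varepsilon u_0$ into $\hat J_\varepsilon(u_0)$: the residual term becomes $-\frac{\varepsilon}{2}\int_\Omega u_0^2\,dx$, which combines with $\frac{\beta+\varepsilon}{2}\int u_0^2$ to give $\frac{\beta}{2}\int u_0^2$. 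Then test the Euler--Lagrange equation against $u_0$ itself and integrate by parts to get $\int fu_0\,dx=\gamma\int|\nabla u_0|^2+\alpha\int u_0^4-\beta\int u_0^2$; substituting this back into $J(u_0)$ yields exactly $-\frac{\gamma}{2}\int|\nabla u_0|^2-\frac{3\alpha}{4}\int u_0^4+\frac{\beta}{2}\int u_0^2=\hat J_\varepsilon(u_0)$.

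For the second variation I expand $\delta^2\hat J_\varepsilon(u_0)\cdot(\varphi,\varphi)$. The three $G$-terms give $-\gamma\int|\nabla\varphi|^2-3\alpha\int u_0^2\varphi^2+(\beta+\varepsilon)\int\varphi^2=-\langle\varphi,(A_0-\varepsilon I)\varphi\rangle$. Differentiating the residual twice produces two contributions: $-\frac{1}{\varepsilon}\int(\delta E(u_0)\varphi)^2=-\frac{1}{\varepsilon}\langle(A_0-\varepsilon I)\varphi,(A_0-\varepsilon I)\varphi\rangle$ and a cross term $-\frac{1}{\varepsilon}\int E(u_0)\,(\delta^2E(u_0)\cdot\varphi^2)=+6\alpha\int u_0^2\varphi^2$ from the cancellation $E(u_0)=-\varepsilon u_0$; the latter corrects the coefficient of $u_0^2\varphi^2$ and is already absorbed in the $(A_0-\varepsilon I)$ factor above. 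Using self-adjointness of $A_0$ this amounts to the operator identity $\delta^2\hat J_\varepsilon(u_0)=-(A_0-\varepsilon I)-\frac{1}{\varepsilon}(A_0-\varepsilon I)^2$ asserted in the theorem. Since $\delta^2J(u_0)>\mathbf{0}$ is assumed to mean $A_0\geq\lambda_0 I$ for some $\lambda_0>0$, for $0<\varepsilon<\lambda_0$ both summands are negative and the second is of order $-(\lambda_0-\varepsilon)^2/\varepsilon=-\mathcal{O}(1/\varepsilon)$, so $\delta^2\hat J_\varepsilon(u_0)<\mathbf{0}$.

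Finally, the local extremality conclusion follows from the standard Taylor expansion argument: $u_0$ is a critical point of $J$ with strictly positive definite second variation, hence a strict local minimizer on some ball $B_r(u_0)\subset V$; and $u_0$ is a critical point of $\hat J_\varepsilon$ with strictly negative definite second variation, hence a strict local maximizer on some ball $B_{r_1}(u_0)\subset V$. Combined with the equality $J(u_0)=\hat J_\varepsilon(u_0)$ already proved, this yields the chain of equalities in the theorem. The only delicate point is the interpretation of $\delta^2J(u_0)>\mathbf 0$ as a coercive, not merely strict, positivity on the tangent space so that $A_0-\varepsilon I$ remains invertible and bounded below; this is the main obstacle, but it is the customary reading of the hypothesis in this non-convex calculus-of-variations setting and is exactly what is needed to ensure the local max character of $\hat J_\varepsilon$ at $u_0$.
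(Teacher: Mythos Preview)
Your argument is correct and covers all four claims. The handling of $\delta\hat J_\varepsilon(u_0)=0$ and of the second variation is essentially the paper's own computation: both use $E(u_0)=-\varepsilon u_0$, split the second variation into the part coming from the first three terms and the two contributions from the residual, and regroup to obtain $-(A_0-\varepsilon I)-\varepsilon^{-1}(A_0-\varepsilon I)^2$. One presentational slip: the raw contribution of the three ``$G$-terms'' to the second variation has coefficient $-9\alpha$ (from $-\tfrac{3\alpha}{4}\int\hat u^4$), not $-3\alpha$; your later sentence about the cross term $+6\alpha\int u_0^2\varphi^2$ ``correcting'' and being ``already absorbed'' shows you know this, but the first displayed line should read $-9\alpha$ before the correction is applied.

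Where your route genuinely differs from the paper is the proof of the value equality $J(u_0)=\hat J_\varepsilon(u_0)$. The paper invokes the Legendre-transform machinery built in the preceding section: it introduces dual variables $(v^*,z^*)$ satisfying $v_0^*+z_0^*=\gamma\nabla u_0$, $v_1^*+z_1^*=\alpha u_0^3$, $v_2^*+z_2^*=(K-\beta-\varepsilon)u_0$, $-\operatorname{div}z_0^*+z_1^*+z_2^*=Ku_0$, writes each $G_i^*$ and $F^*$ as an inner product minus $G_i$ (resp.\ $F$), and cancels to recover $J(u_0)$. Your approach is more elementary and self-contained: substitute $E(u_0)=-\varepsilon u_0$ into $\hat J_\varepsilon(u_0)$ and then test the Euler--Lagrange equation against $u_0$ to rewrite $\int fu_0$. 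This avoids the auxiliary dual variables entirely and is shorter; the paper's route, on the other hand, makes explicit that $\hat J_\varepsilon$ is the restriction of the dual functional $J^*(v^*,z^*)$ to the critical manifold in $z^*$, which is the conceptual point of the article. Both are valid.
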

\begin{proof}
From $\delta J(u_0)=\mathbf{0}$ we have
$$-\gamma \nabla^2u_0+\alpha u_0^3-\beta u_0-f=0, \text{ in } \Omega.$$

Thus defining $\hat{u}=u_0$ we have
\begin{eqnarray}&&-\gamma \nabla^2u_0+\alpha u_0^3-(\beta +\varepsilon)u_0-f
\nonumber \\ &=& -\varepsilon u_0 \nonumber \\ &=&-\varepsilon \hat{u}, \text{ in } \Omega.\end{eqnarray}

Hence
$$\hat{u}=-\frac{-\gamma \nabla^2u_0+\alpha u_0^3-(\beta +\varepsilon)u_0-f}{\varepsilon}.$$

From such an expression for $\hat{u}$ we obtain
\begin{eqnarray}
\delta\hat{J}_\varepsilon(u_0)=\gamma \nabla^2u_0-3\alpha u_0^3+(\beta+\varepsilon)u_0-
\gamma \nabla^2 \hat{u}+3 \alpha \hat{u}u_0^2-(\beta+\varepsilon)\hat{u}.
\end{eqnarray}

From this, since $\hat{u}=u_0$, we get
$$\delta \hat{J}_\varepsilon(u_0)=\mathbf{0}.$$

Also, we may define $v^*,z^*$  such that
\begin{equation}\label{p0a122a}v_0^*=-z_0^*+\gamma \nabla \hat{u},\end{equation}
\begin{equation}\label{p0a123b}v_1^*=-z_1^*+\alpha \hat{u}^3,\end{equation}
\begin{equation}\label{p0a124c}v_2^*=-z_2^*+(K-\beta-\varepsilon) \hat{u},\end{equation}
$$-\text{ div } z_0^*+z_1^*+z_2^*=K\hat{u},$$
so that
\begin{eqnarray}\text{ div }v_0^*-v_1^*-v_2^*&=& -\text{ div }z_0^*+z_1^*+z_2^* \nonumber \\ &&
+\gamma \nabla^2 \hat{u}-\alpha \hat{u}^3-(K-\beta-\varepsilon) \hat{u} \nonumber \\ &=& \gamma \nabla^2 \hat{u}-\alpha \hat{u}^3+(\beta+\varepsilon) \hat{u}
\nonumber \\ &=& \varepsilon \hat{u}-f.
\end{eqnarray}
From such relations we obtain,
$$G_0^*(v_0^*,z_0^*)=\langle \nabla \hat{u},v_0^*+z_0^* \rangle_{L^2}-G_0(\nabla \hat{u}),$$
$$G_1^*(v_1^*,z_1^*)=\langle  \hat{u},v_1^*+z_1^* \rangle_{L^2}-G_1(\hat{u}),$$
$$G_2^*(v_2^*,z_2^*)=\langle  \hat{u},v_2^*+z_2^* \rangle_{L^2}-G_3(\hat{u}),$$
$$G_3^*(\text{ div }v^*_0-v_1^*-v_2^*+f)=\langle \hat{u}, \text{ div }v^*_0-v_1^*-v_2^* \rangle_{L^2}+\langle \hat{u},f\rangle_{L^2}-G_3(\hat{u})$$
and
$$F^*(\text{ div }z^*_0-z_1^*-z_2^*)=\langle \hat{u},- \text{ div }z^*_0+z_1^*+z_2^* \rangle_{L^2}-F(\hat{u}).$$

From such results, we obtain
\begin{eqnarray}
\hat{J}_\varepsilon(\hat{u})&=& J^*(v^*,z^*)\nonumber \\ &=& -G_0^*(v_0^*+z_0^*)-G_1^*(v_1^*+z_1^*) \nonumber \\ &&-G_2^*(v_2^*+z_2^*)-G_3^*(\text{ div }v^*_0-v_1^*-v_2^*+f)
+F^*(-\text{ div } z_0^*+z_1^*+ z_2^*) \nonumber \\ &=& G_0(\nabla \hat{u})+G_1(\hat{u})+G_2(\hat{u})+G_3(\hat{u})-F(\hat{u}) -\langle \hat{u},f \rangle_{L^2}
\nonumber \\ &=&
J(\hat{u})\nonumber \\ &=& J(u_0). \end{eqnarray}

Finally, for $\varepsilon>0$ sufficiently small,\begin{eqnarray}
\delta^2\hat{J}_\varepsilon(u_0)&=& \gamma \nabla^2-9\alpha u_0^2+\beta+\varepsilon \nonumber \\ &=&
-\frac{(-\gamma \nabla^2+3\alpha u_0^2-(\beta+\varepsilon))^2}{\varepsilon}+6\alpha u_0^2 \nonumber \\ &=&
\gamma \nabla^2-3\alpha u_0^2+\beta+\varepsilon-\frac{(-\gamma \nabla^2+3\alpha u_0^2-(\beta+\varepsilon))^2}{\varepsilon} \nonumber \\
 &=&-(\delta^2 J(u_0)-\varepsilon)-\frac{(\delta^2J(u_0)-\varepsilon)^2}{\varepsilon} \nonumber \\
 &\approx&
-\mathcal{O}\left(\frac{1}{\varepsilon}\right) \nonumber \\ &<& \mathbf{0}.
\end{eqnarray}
From these last results, there exist $r,r_1>0$ such that
\begin{eqnarray}
J(u_0)&=&\inf_{u \in B_r(u_0)}J(u) \nonumber \\ &=&
\sup_{\hat{u} \in B_{r_1}(u_0)}\hat{J}_\varepsilon(\hat{u})
\nonumber \\ &=&\hat{J}_\varepsilon(u_0).
\end{eqnarray}

The proof is complete.
\end{proof}

\section{Conclusion}
In this article we develop a primal dual variational formulation for a large class of problems in the calculus of variations.

We emphasize the dual functional obtained has a large domain region of concavity about a critical point, which makes such a formulation
 very interesting from a numerical analysis point of view.

Finally, it has been formally proven  there is no duality gap between the primal and dual formulations in a local extremal context.

\end{document}